\documentclass[12pt]{article}
\pagestyle{plain}

\setlength\voffset{-1in}
\setlength\hoffset{-0.5in}
\addtolength\textheight{1.75in}
\addtolength\textwidth{1.25in}


\setlength{\parskip}{1ex plus 0.5ex minus 0.2ex}
\usepackage{indentfirst,amsthm,amssymb,url}
\newtheorem{theorem}{Theorem}

\newtheorem{lemma}{Lemma}

\title{A Lower Bound for $\tau(n)$ of $k$-Multiperfect Number }
\author{Keneth Adrian P. Dagal\\
\emph{kendee2012@gmail.com}\\
Department of Mathematics\\
Far Eastern University\\
Manila, Philippines}
\date{}
\begin{document}

\maketitle

\begin{abstract}
A natural number $n$ is said to be $k$- multiperfect number if $\sigma (n) = k\cdot n$ for some integer $k>2$. In this paper, I will provide a lower bound for $\tau(n)$ of any $k$- multiperfect numbers. The lower bound for $\tau(n)$ will help in distinguishing if the number is $k$-multiperfect or not.
\end{abstract}		

\section{Preliminary Concepts}

\noindent The sum-of-positive divisor function $\sigma_m(n)$ is defined as
 $$\sigma_m(n)= \sum_{d | n }d^m $$ where $d$ is a factor of $n$ for natural numbers $n$ and complex numbers $m$. In this definition, we concentrate only for $m=0$ and $m=1$ and denote them as $\tau(n)$ and $\sigma(n)$ respectively. It is easy to see then that $\tau(n)$ counts the number of divisors of $n$ and $\sigma(n)$ gives the sum of the divisors of $n$. It is a known theorem that for any natural number $n = \prod_{i=1}^n p_i^{\alpha_1}$, $$\tau(n) =\prod_{i=1}^n(\alpha_i+1)$$
 For $\sigma(n)$, $n$ is said to be perfect if $\sigma(n)=2n$.But if $\sigma(n) > 2n$ and $\sigma(n) < 2n$, it is said to be abundant and deficient numbers respectively. In addition,a natural number $n$ is said to be $k$- multiperfect number if $\sigma (n) = k\cdot n$ for some integer $k>2$. It should be noted that for integer $k>3$ of $k$- multiperfect numbers, all these $k$- multiperfect numbers are abundant.

In studying perfect numbers, the abundancy index is helpful and defined as $$I(n) = \frac{\sigma(n)}{n}$$. If $n$ is $k$-multiperfect, then $\sigma(n) = k\cdot n$ and that implies $I(n) =k$.

It is easy to see that $$I(n)= \sum_{d|n}\frac{1}{d}= k$$.

On the other hand, we know that the $n$th harmonic number  denoted by $H_n$ is defined as $$H_n = \sum_{i=1}^n\frac{1}{i}$$. Clearly, $I(n) \leq H_n$ for all natural numbers $n$.

\section{Some Results}

\noindent Let us first consider some lemmas.

\begin{lemma}
For $n \in \mathbb{N}$, the inequality $$\Big(1+\frac{1}{k(k+2)}\Big)^k \leq 1+ \frac{1}{k+1} \leq \Big(1+\frac{1}{k(k+1)}\Big)^k$$ holds.
\end{lemma}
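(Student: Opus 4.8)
The plan is to treat the two inequalities separately, since they are of quite different difficulty.

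For the right-hand inequality $1 + \frac{1}{k+1} \le \left(1 + \frac{1}{k(k+1)}\right)^{k}$, I would simply invoke Bernoulli's inequality $(1+x)^{k} \ge 1 + kx$, valid for $x \ge -1$ and in particular for $x = \frac{1}{k(k+1)} > 0$. Substituting gives $\left(1 + \frac{1}{k(k+1)}\right)^{k} \ge 1 + \frac{k}{k(k+1)} = 1 + \frac{1}{k+1}$, which is exactly the claim. This step is routine (and one sees immediately that equality holds at $k=1$).

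For the left-hand inequality $\left(1 + \frac{1}{k(k+2)}\right)^{k} \le 1 + \frac{1}{k+1}$, I would first rewrite the base in a form suited to the (weighted) AM--GM inequality, using the factorization $1 + \frac{1}{k(k+2)} = \frac{k^{2}+2k+1}{k(k+2)} = \frac{(k+1)^{2}}{k(k+2)}$, and likewise $1 + \frac{1}{k+1} = \frac{k+2}{k+1}$. Set $a = 1 + \frac{1}{k(k+2)}$ and $b = \frac{k+1}{k+2}$, both positive. The key computation is that $ka + b = \frac{(k+1)^{2}}{k+2} + \frac{k+1}{k+2} = \frac{(k+1)(k+2)}{k+2} = k+1$. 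Applying AM--GM to the $k+1$ positive numbers $a, a, \dots, a, b$ then yields $\sqrt[k+1]{a^{k} b} \le \frac{ka+b}{k+1} = 1$, hence $a^{k} \le \frac{1}{b} = \frac{k+2}{k+1} = 1 + \frac{1}{k+1}$, as required.

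The main obstacle is identifying the correct auxiliary weight $b = \frac{k+1}{k+2}$ that makes the arithmetic mean collapse to $1$; once that quantity is in hand the rest is a one-line verification. (Equivalently, one could take logarithms and combine the sharp bound $\ln(1+x) \ge \frac{x}{1+x}$ applied to $x = \frac{1}{k+1}$ with the bound $\ln(1+x) \le x$ applied to $x = \frac{1}{k(k+2)}$, giving $k\ln\!\left(1+\frac{1}{k(k+2)}\right) \le \frac{1}{k+2} \le \ln\!\left(1+\frac{1}{k+1}\right)$; I would prefer the AM--GM route since it stays elementary and avoids transcendental functions.) I would also note explicitly that $b>0$ for every integer $k \ge 1$, so AM--GM genuinely applies, which closes the argument.
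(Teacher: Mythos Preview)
Your proof is correct. For the right-hand inequality, your use of Bernoulli's inequality is essentially the same idea as the paper's, which expands $\bigl(1+\tfrac{1}{k(k+1)}\bigr)^{k}$ by the binomial theorem and observes that the first two terms already give $1+\tfrac{1}{k+1}$; Bernoulli is just the clean name for that observation. For the left-hand inequality, however, you take a genuinely different route. The paper raises both sides to the $(k+2)$nd power, reducing the claim to $\bigl(1+\tfrac{1}{x}\bigr)^{x}\le\bigl(1+\tfrac{1}{y}\bigr)^{y+1}$ with $x=k(k+2)$ and $y=k+1$, and then appeals to the classical facts that $\bigl(1+\tfrac{1}{x}\bigr)^{x}\uparrow e$ and $\bigl(1+\tfrac{1}{y}\bigr)^{y+1}\downarrow e$, sandwiching both sides around $e$. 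Your AM--GM argument, by contrast, is completely self-contained: once you spot the auxiliary weight $b=\tfrac{k+1}{k+2}$ that makes $ka+b=k+1$, the inequality drops out with no appeal to limits or to properties of $e$. The paper's approach is shorter if one is willing to quote the monotone convergence of $(1+1/n)^{n}$ and $(1+1/n)^{n+1}$ as known, whereas your approach is more elementary and would be preferable in a setting where those facts are not yet available. Your parenthetical logarithmic variant is also valid and is, in spirit, closer to the paper's analytic flavor.
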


\begin{proof}
Consider first the inequality  $$1+\frac{1}{k+1} \leq \Big(1+\frac{1}{k(k+1)}\Big)^k$$
 By binomial expansion on the RHS of the inequality, we have $$\Big(1+\frac{1}{k(k+1)}\Big)^k = \sum_{i=0}^k{k \choose i} 1^{k-i}\Big(\frac{1}{k(k+1)}\Big)^i = 1 + \frac{1}{k+1}+ \sum_{i=2}^k{k \choose i} 1^{k-i}\Big(\frac{1}{k(k+1)}\Big)^i .$$
 Clearly, $$ 0 \leq \sum_{i=2}^k{k \choose i} 1^{k-i}\Big(\frac{1}{k(k+1)}\Big)^i.$$
 Adding both sides by $1 +\frac{1}{k+1}$, we arrive on the desired inequality. On the other hand, consider the inequality
 $$\Big(1+\frac{1}{k(k+2)}\Big)^k \leq 1+ \frac{1}{k+1}$$ Raising both sides by $k+2$, we get $$\Big(1+\frac{1}{k(k+2)}\Big)^{k(k+2)} \leq \Big(1+ \frac{1}{k+1}\Big)^{k+2} \Leftrightarrow \Big(1+\frac{1}{x}\Big)^x \leq \Big(1+ \frac{1}{y}\Big)^{y+1}.$$
 Since the $$\Big (\lim_{x \rightarrow +\infty} \Big(1+\frac{1}{x}\Big)^x = e \Big) \wedge \Big(\lim_{y \rightarrow +\infty} \Big(1+\frac{1}{y}\Big)^{y+1} = e\Big) $$ from below and from above respectively, then that proves the inequality.
\end{proof}

\begin{lemma}
The inequality $$  \sum_{i=2}^n \frac{1}{i} <\int_{1}^{n} \frac{1}{x} \,\,dx < \sum_{i=1}^n \frac{1}{i} $$ holds.
\end{lemma}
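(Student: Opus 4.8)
The plan is a standard integral–comparison argument exploiting that $f(x)=1/x$ is positive, continuous, and \emph{strictly} decreasing on $[1,\infty)$. First I would fix an integer $i\ge 1$ and bound the piece of the integral over $[i,i+1]$. Since $\frac{1}{i+1}<\frac{1}{x}<\frac{1}{i}$ for every $x\in(i,i+1)$, integrating over $[i,i+1]$ (an interval of length $1$) gives
$$\frac{1}{i+1}<\int_{i}^{i+1}\frac{1}{x}\,dx<\frac{1}{i},$$
the inequalities being strict because $1/x$ differs from each constant on a set of positive measure.

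Next I would sum this double inequality over $i=1,2,\dots,n-1$ and use additivity of the integral, $\int_1^n\frac{1}{x}\,dx=\sum_{i=1}^{n-1}\int_i^{i+1}\frac{1}{x}\,dx$. The left ends telescope to $\sum_{i=1}^{n-1}\frac{1}{i+1}=\sum_{i=2}^{n}\frac{1}{i}$, and the right ends give $\sum_{i=1}^{n-1}\frac{1}{i}$. Thus
$$\sum_{i=2}^{n}\frac{1}{i}<\int_{1}^{n}\frac{1}{x}\,dx<\sum_{i=1}^{n-1}\frac{1}{i}\le\sum_{i=1}^{n}\frac{1}{i},$$
the last step just adjoining the nonnegative term $\frac{1}{n}$ to weaken the upper bound to the form stated in the lemma.

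There is essentially no deep obstacle here; the only points needing care are (i) justifying that the elementary inequalities are \emph{strict} after integration — handled by strict monotonicity of $1/x$, or equivalently by noting the integrand is not constant on $[i,i+1]$ — and (ii) the degenerate case $n=1$, for which the statement as literally written reads $0<0<1$ and fails, so the lemma should be understood for $n\ge 2$ (for $n\ge 2$ the sum $\sum_{i=1}^{n-1}$ above is nonempty and everything goes through). I would state that restriction explicitly at the start of the proof.
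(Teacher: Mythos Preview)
Your argument is correct and is in fact the standard integral--comparison proof; the observation about $n=1$ is also apt. The paper, however, takes a different route: it invokes its Lemma~1 to show that the sequence $\bigl(1+\tfrac{1}{k}\bigr)^k$ is increasing and bounded above by $e$, then takes logarithms to obtain $\ln\!\bigl(\tfrac{k+1}{k}\bigr)<\tfrac{1}{k}$, recognizes the left side as $\int_k^{k+1}\tfrac{1}{x}\,dx$, and sums; the left inequality is then dismissed as ``similar.'' So both proofs ultimately rest on the same pointwise bound $\int_k^{k+1}\tfrac{1}{x}\,dx<\tfrac{1}{k}$, but you reach it directly from the strict monotonicity of $1/x$, whereas the paper detours through the monotone convergence of $(1+1/k)^k$ to $e$. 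Your approach is shorter, self-contained (it does not need Lemma~1 or any facts about $e$), and yields both inequalities at once with a single double estimate; the paper's approach has the side benefit of tying the lemma back to Lemma~1, but at the cost of a longer and somewhat circuitous argument.
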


\begin{proof}
By lemma 1, $$\Big(1+\frac{1}{k(k+2)}\Big)^k \leq 1+ \frac{1}{k+1}$$ By some manipulations,$$\Big(\frac{(k+1)(k+1)}{k(k+2)}\Big)^k \leq \frac{k+2}{k+1} \Rightarrow \Big(\frac{k+1}{k}\Big)^k \leq \Big(\frac{k+2}{k+1}\Big) \Big(\frac{k+2}{k+1}\Big)^k = \Big(\frac{k+2}{k+1}\Big)^{k+1}.$$
Thus, we get $$ \Big(1+\frac{1}{k}\Big)^k \leq  \Big(1+\frac{1}{k+1}\Big)^{k+1} < e$$ Now, we consider the inequality  $$ \Big(1+\frac{1}{k}\Big)^k  < e \Rightarrow e^{\ln\Big(1+\frac{1}{k}\Big)} < e^{\frac{1}{k}} \Rightarrow \ln\Big(\frac{k+1}{k}\Big) < \frac{1}{k}.$$
Therefore,
\begin{eqnarray*}
 \ln\Big(\frac{k+1}{k}\Big)& < & \frac{1}{k}\\
\ln(k+1) -\ln(k)& <& \\
\int_{k}^{k+1} \frac{1}{x}\,\, dx& <& \\
\sum_{k=1}^{n}\int_{k}^{k+1} \frac{1}{x}\,\, dx& < & \sum_{k=1}^{n}\frac{1}{k}\\
\int_{1}^{n} \frac{1}{x}\,\, dx& < & \sum_{k=1}^{n}\frac{1}{k} 
\end{eqnarray*}

The other inequality can be solved in similar fashion.
\end{proof}

\noindent The previous lemma can be written as $$ H_n -1 < H_n-  (\gamma + \epsilon) < H_n $$ where $\gamma$ is the Euler- Mascheroni constant and $\epsilon$, a positive number that can be expressed as $$\sum_{m=2}^\infty =\frac{\zeta(n, m+1)}{m}$$ and where $\zeta(n, m+1)$ is said to be the Hurwitz zeta function. From this inequality, we can have a bound for $\gamma$.
$$-\epsilon < \gamma  < 1- \epsilon $$ As $n \rightarrow + \infty$, $\epsilon \rightarrow 0$ and that will give us $0 < \gamma < 1$. In fact, $\gamma = 0.57721 ...$ ( see Sloane's A001620 at OEIS.org)

\section{Main Results}

\noindent We can now rewrite $H_n$ as $$H_n = \ln(n) + \gamma + \epsilon$$ Since we know that $ \epsilon < 1- \gamma  < 0.5$, then the margin of error $\epsilon$ becomes minimal and can be "ignored".Before we proceed to the main result, let us have some necessary results.

\begin{theorem}
For nonnegative integers $k_i$,$$\sum_{i=1}^n\frac{1}{k_i} \leq \sum_{i=1}^n\frac{1}{i} $$ where for every $k_i$ and $k_j$, $k_i \neq k_j$ and for all $k_i$ and $k_{i+1}$ , $k_i < k_{i+1}$. 
\end{theorem}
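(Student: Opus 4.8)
The plan is to reduce this to a termwise comparison resting on a single elementary fact: a strictly increasing sequence of positive integers satisfies $k_i \ge i$ for every index $i$. First I would observe that, although the hypothesis reads ``nonnegative integers,'' the presence of $1/k_i$ forces each $k_i \ge 1$, so we are genuinely dealing with $n$ distinct positive integers which, under the imposed ordering $k_1 < k_2 < \cdots < k_n$, form a strictly increasing sequence in $\mathbb{N}$.

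Next I would establish $k_i \ge i$ for all $1 \le i \le n$ by induction on $i$. The base case is $k_1 \ge 1$, which is immediate since $k_1$ is a positive integer. For the inductive step, assume $k_i \ge i$; since $k_i < k_{i+1}$ and both are integers, we get $k_{i+1} \ge k_i + 1 \ge i + 1$, which completes the induction.

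Finally, since $t \mapsto 1/t$ is decreasing on $(0,\infty)$, the bound $k_i \ge i > 0$ gives $\frac{1}{k_i} \le \frac{1}{i}$ for each $i$; summing over $i = 1, \dots, n$ yields $\sum_{i=1}^n \frac{1}{k_i} \le \sum_{i=1}^n \frac{1}{i} = H_n$, which is exactly the assertion and also ties this bound to the harmonic-number estimates of Section 2.

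The argument is short, so there is no genuine ``hard part''; the only points requiring care are (i) flagging that the $k_i$ must be positive rather than merely nonnegative for the reciprocals to be defined, and (ii) invoking the ordering hypothesis precisely where it is needed, namely to guarantee that among the first $i$ terms no value smaller than $i$ is skipped, which is what forces $k_i \ge i$. An equivalent packaging of the result is the statement that, among all sets of $n$ distinct positive integers, $\{1, 2, \dots, n\}$ maximizes the sum of reciprocals; one could prove this by an exchange argument instead, but the induction above is cleaner and more direct.
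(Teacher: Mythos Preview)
Your proof is correct and somewhat cleaner than the paper's. The paper argues at the level of sets rather than term by term: if $\{k_1,\dots,k_n\}\neq\{1,\dots,n\}$ then by pigeonhole some $k_i$ lies outside $S=\{1,\dots,n\}$, hence $k_i>n$, and each such ``overflow'' term satisfies $1/k_i<1/n\le 1/j$ for the corresponding $j\in S$ that is missing from the $k$'s; summing the matched and unmatched parts separately gives the inequality. This is essentially the exchange argument you mention at the end as an alternative. Your route instead establishes the pointwise bound $k_i\ge i$ by a one-line induction and then compares termwise, which avoids the need to pair overflow $k_i$'s with missing $j$'s and makes the equality case $k_i=i$ transparent. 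Both arguments are elementary; yours is shorter and also makes explicit the minor repair that the $k_i$ must in fact be positive (not merely nonnegative) for the statement to make sense.
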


\begin{proof}
It should be noted that equality holds if $k_i = i$. Now suppose that there exists $k_i \neq i$. This would mean that in the set $S = \{1,2,3,..., n\},$ there is $k_i \notin S$. Thus, $k_i > n$. Now, we have $k_i$'s such that $$ \frac{1}{k_i}  < \frac{1}{n} < \frac{1}{j} $$  for all $j \in S$ such that $j \neq k_i$. Adding all unit fractions $\frac{1}{j}$ for $j\neq k_i$ and $j = k_i$, we get $$\sum_{j \neq k_i}\frac{1}{k_i} + \sum_{j = k_i}\frac{1}{k_i} \leq \sum_{j \neq k_i}\frac{1}{j} +\sum_{j= k_i}\frac{1}{j}$$ and thus, 
$$\sum_{i=1}^n\frac{1}{k_i} \leq \sum_{i=1}^n\frac{1}{i} $$
\end{proof}

Suppose that $k_i$'s are not just any random natural numbers but rather all $k_i | n$ and the $n$ in the $\sum_{i=1}^n\frac{1}{k_i}$ will be replaced with $\tau(n)$. From this, we can rewrite the above inequality as $$k=I(n)= \sum_{d|n}\frac{1}{d}=\sum_{i=1 ; d_i | n}^{\tau(n)}\frac{1}{d_i} \leq H_{\tau(n)}$$

\begin{theorem}[A Lower bound of $\tau(n)$] For any natural $n$, the natural number $n$ can be a $k$- multiperfect if the property $$e^{k-\gamma} < \tau(n)$$ is satisfied.
\end{theorem}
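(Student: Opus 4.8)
The plan is to chain together the facts already assembled in the excerpt. We know that if $n$ is $k$-multiperfect then $I(n)=k$, and by the displayed inequality immediately preceding the statement we have $k = I(n) = \sum_{d\mid n} \tfrac1d \le H_{\tau(n)}$. So the first step is to record this: $k \le H_{\tau(n)}$. The second step is to invoke the Main Results rewriting of the harmonic number, namely $H_m = \ln(m) + \gamma + \epsilon$ with $\epsilon \ge 0$ (this is how the paper has set things up, treating $\epsilon$ as a small nonnegative correction term). Applying this with $m = \tau(n)$ gives $k \le \ln(\tau(n)) + \gamma + \epsilon$.

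The third step is the algebraic rearrangement. From $k \le \ln(\tau(n)) + \gamma + \epsilon$ we get $k - \gamma - \epsilon \le \ln(\tau(n))$, and exponentiating (the exponential is monotincreasing) yields $e^{k-\gamma-\epsilon} \le \tau(n)$. Since $\epsilon \ge 0$ we have $e^{k-\gamma-\epsilon} \le e^{k-\gamma}$ is the wrong direction, so instead I would argue that because the paper has declared $\epsilon$ negligible / that the bound is meant to be stated with $\epsilon$ absorbed, one states the clean inequality $e^{k-\gamma} < \tau(n)$ as the necessary condition, with the strictness coming from the strict inequality $\int_1^n \tfrac1x\,dx < \sum_{k=1}^n \tfrac1k$ in Lemma 2 (which makes $\ln(\tau(n)) < H_{\tau(n)}$ strict, hence $k < \ln(\tau(n)) + \gamma + \epsilon$ can be upgraded, or rather one uses $\ln m < H_m$ directly to get $e^{H_m} > m$ and combines). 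Concretely: $k \le H_{\tau(n)}$ and $H_m - \gamma - \epsilon = \ln m$, so I would present it as $k - \gamma \le \ln \tau(n) + \epsilon$ and then, treating $\epsilon$ as the ignorable margin flagged in Section 3, conclude $e^{k-\gamma} < \tau(n)$.

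The main obstacle is the handling of $\epsilon$: the honest inequality one can derive is $e^{k - \gamma - \epsilon} \le \tau(n)$, and since $\epsilon > 0$ this does not formally imply $e^{k-\gamma} < \tau(n)$ — it goes the other way. So the real content is either (i) to use instead the strict bound $\ln \tau(n) < H_{\tau(n)}$ from Lemma 2, giving $\tau(n) > e^{H_{\tau(n)}} \cdot(\text{something})$, or more cleanly (ii) to note $H_{\tau(n)} > \ln(\tau(n)) + \gamma$ would be needed, which is actually true since the sequence $H_m - \ln m$ decreases to $\gamma$ from above, so $H_m - \ln m > \gamma$ for all finite $m$, hence $k \le H_{\tau(n)}$ combined with $H_{\tau(n)} > \ln \tau(n) + \gamma$ is consistent but does not chain in the needed direction either. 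The cleanest rigorous route, which I would adopt, is: $\gamma < H_m - \ln m$, so $e^{\gamma} < e^{H_m - \ln m} = e^{H_m}/m$, i.e. $m < e^{H_m - \gamma}$; then from $k \le H_{\tau(n)}$, wait — this still needs care. I expect the genuinely load-bearing step to be pinning down precisely which direction of the $H_m$ versus $\ln m + \gamma$ comparison is used, and I would resolve it by writing $k \le H_{\tau(n)} = \ln \tau(n) + \gamma + \epsilon_{\tau(n)}$ with $0 < \epsilon_{\tau(n)} < 1-\gamma$, hence $k - \gamma < \ln \tau(n) + (1-\gamma)$, and then observe that for the bound to be stated in the advertised form one takes the limiting/idealized version where $\epsilon \to 0$, yielding $k - \gamma \le \ln \tau(n)$ and therefore $e^{k-\gamma} \le \tau(n)$, with strictness from Lemma 2. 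I would flag explicitly that the inequality is the "necessary condition" form — $n$ can be $k$-multiperfect only if $e^{k-\gamma} < \tau(n)$ — matching the hedged phrasing ("can be") in the statement.
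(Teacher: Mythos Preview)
Your plan is essentially identical to the paper's proof. The paper's argument is just two lines: it records $k < H_{\tau(n)} = \ln(\tau(n)) + \gamma + \epsilon$, then says ``we eliminate $\epsilon$'' to obtain $k-\gamma < \ln(\tau(n))$ and hence $e^{k-\gamma} < \tau(n)$.

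The obstacle you spend most of your proposal on --- that dropping a positive $\epsilon$ from the right-hand side of $k \le \ln(\tau(n)) + \gamma + \epsilon$ goes the wrong way and does \emph{not} yield $k - \gamma \le \ln(\tau(n))$ --- is real, and the paper does not resolve it; it simply asserts the step. In fact the paper's own table in Section~4 exhibits values (e.g.\ $k=2,3,5$) where $e^{k-\gamma}$ slightly exceeds the least $m$ with $H_m > k$, and the accompanying text concedes that ``it is necessary to verify for some small natural numbers due to the effect if $\epsilon$ is not included.'' So your diagnosis is correct: the stated inequality is an idealized bound obtained by treating $\epsilon$ as negligible, not a rigorous consequence of the preceding lemmas, and none of your attempted repairs (via Lemma~2 or the monotonicity of $H_m - \ln m$) can close the gap, because in the direction needed the gap is genuine. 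You have matched the paper's approach and, additionally, correctly located a weakness that the paper itself acknowledges only informally.
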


\begin{proof}
It was already established that $$k < H_{\tau(n)}= \ln(\tau(n)) + \gamma + \epsilon$$ 

\noindent From here, we eliminate can eliminate $\epsilon$ and we have $$k-\gamma < \ln(\tau(n)) \Rightarrow e^{k-\gamma} < \tau(n) $$
\end{proof}

\section{Illustration of the Theorem}

 It is necessary to verify for some small natural numbers due to the effect if $\epsilon$ is not included.  The table below will provide numerical information up to $k=26$, that is the least $\tau(n)$ for every $k$-multiperfect numbers.
 
 \begin{center}
 	\begin{tabular}{|c|c|c|} \hline
 $k$ & $e^{k-\gamma} $& $ min (\tau(n))$ for $ H_{\tau(n)}> k$ \\ \hline
 1  & 1.526205112 & 1 \\ \hline
 2  & 4.148655621 & 4 \\ \hline
 3  & 11.27721519 & 11 \\\hline
 4  & 30.65464912 & 31\\ \hline
 5  & 83.32797566 & 83\\  \hline
 6  & 226.5089221 & 227 \\\hline
 7  & 615.7150868 & 616\\  \hline
 8  & 1673.687132 & 1674\\  \hline
 9  & 4549.553317 & 4550 \\\hline
 10 & 12366.96811 & 12367 \\\hline
 11 & 33616.90469 & 33617\\\hline
 12 & 91380.22114 & 91380 \\ \hline
 13 & 248397.1946 & 248397  \\ \hline
 14 & 675213.5803 & 675214 \\\hline
 15 & 1835420.806 & 1835421\\ \hline
 16 & 4989191.024 & 4989191\\  \hline
 17 & 13562027.30 & 13562027 \\\hline
 18 & 36865412.36 & 36865412\\  \hline
 19 & 100210580.5 & 100210581\\  \hline
 20 & 272400600.1 & 272400600 \\\hline
 21 & 740461601.2 & 740461601\\\hline
 22 & 02012783315 & 2012783315\\\hline
 23 & 05471312310 & 5471312310\\ \hline
 24 & 14872568831 & 14872568831 \\ \hline
 25 & 40427833596 & 40427833596 \\\hline
 \end{tabular}
 \end{center}
 
 The table illustrates that suppose $\tau(n) = 2000000$, then $n$ can never be 16-multiperfect. This helps us distinguish of a particular $n$ can be $k$- multiperfect based on its $\tau(n)$.  Although the lower bound is not that $tight$ for every $k$- multiperfect number, at the very least, it does provide some information about it.

\section{Acknowledgement}

The author would like to thank Jose Arnaldo Dris,for inspiring him in doing this research, Calvin Lin, for some advice in proving Lemma 1, and Solomon Olayta, for useful conversation about unit fractions.

\end{document}